\numberwithin{equation}{section}
\theoremstyle{plain}
\newtheorem{thm}{Theorem}[section]
\newtheorem{lemma}{Lemma}[section]
\newtheorem{remark}{Remark}[section]
\newtheorem{Corollary}{Corollary}[section]
\newtheorem{proposition}{Proposition}[section]
\def \E {{\mathbb E}}
\def \P {{\mathbb P}}
\def \RR {{\mathbb R}}
\def \lf {\underline{f}}
\def \uf {\bar{f}}
\begin{document}

\begin{frontmatter}
\title{Logarithmic tails of sums of products of positive random variables bounded by one}
\runtitle{Logarithmic tails of sums of products}

\begin{aug}
\author{\fnms{Bartosz} \snm{Ko{\l}odziejek}\ead[label=e1]{kolodziejekb@mini.pw.edu.pl}}
\runauthor{B. Ko{\l}odziejek}

\affiliation{Warsaw University of Technology}

\address{Faculty of Mathematics and Information Science\\Warsaw University of Technology\\ Koszykowa 75\\00-662 Warsaw, Poland \\
\printead{e1}}

\end{aug}

\begin{abstract}
In this paper, we show under weak assumptions that for $R\stackrel{d}{=}1+M_1+M_1M_2+\ldots$, where $\P(M\in[0,1])=1$ and $M_i$ are independent copies of $M$, we have $\ln\P(R>x)\sim C\, x\ln\P(M>1-1/x)$ as $x\to\infty$. The constant $C$ is given explicitly and its value depends on the rate of convergence of $\ln\P(M>1-1/x)$. Random variable $R$ satisfies the stochastic equation $R\stackrel{d}{=}1+MR$ with $M$ and $R$ independent, thus this result fits into the study of tails of iterated random equations, or more specifically, perpetuities.\end{abstract}

\begin{keyword}[class=MSC]
\kwd[Primary ]{60H25}
\kwd[; secondary ]{60E99}
\end{keyword}

\begin{keyword}
\kwd{Random iterated equation}
\kwd{perpetuity}
\kwd{regular variation}
\kwd{tail asymptotic}
\end{keyword}

\end{frontmatter}

\section{Introduction}
In the present paper we consider a random variable $R$ given by the solution of the stochastic equation
\begin{align}\label{defperp}
R \stackrel{d}{=}Q + MR,\qquad R\mbox{ and }(Q,M)\mbox{ independent.}
\end{align}
When $R$ is the solution of \eqref{defperp}, then following a custom from insurance mathematics, we call $R$ a \emph{perpetuity}. In this scheme, $Q$ represents the payment, and $M$ the discount factor, both being subject to random fluctuations. Then $R$ is the present value of a commitment to pay the value of $Q$ every year in the future (see \eqref{perpd} below). Such stochastic equation appears in many areas of applied mathematics; for a broad list of references see, for example, \citep{DF99} and \citep{EG93}.

Under suitable assumptions [see \eqref{cond}] on $(Q,M)$, one can think of $R$ as a limit in distribution of the following iterative scheme:
\begin{align}\label{it}
R_n = Q_n + M_nR_{n-1}, \qquad n \geq 1,
\end{align}
where $R_{n-1}$ and $(Q_n,M_n)$ are independent, $R_0$ is arbitrary and $(Q_n,M_n)$, $n \geq 1$, are i.i.d. copies of $(Q,M)$. Writing out the above recurrence and renumbering
the random variables $(Q_n,M_n)$, we see that $R$ may also be defined by
\begin{align}\label{perpd}
R \stackrel{d}{=}\sum_{j=1}^\infty Q_j \prod_{k=1}^{j-1}M_k,
\end{align}
provided that the series above converges in distribution. Sufficient conditions for
the almost sure convergence of the series in \eqref{perpd} have been given by Kesten \citep{Kes73} who
also considered a multidimensional case when $M$ is a matrix and $Q$ is a vector. For a detailed
discussion of sufficient and necessary conditions in one-dimensional case, we refer to Vervaat \citep{ver79} and Goldie and Maller \citep{GM00}; conditions
\begin{align}\label{cond}
\E\ln^+ |Q| < \infty\qquad\mbox{ and }\qquad\E\ln|M| < 0
\end{align}
suffice for the almost sure convergence of the series in \eqref{perpd}.

The main focus of research is the tail behavior of $R$. A classical result of Kesten \citep{Kes73} (see also \citep{Gri75,Gol91} for one-dimensional case) states that if
there exists a constant $\kappa>0$ such that $\E|M|^\kappa=1$, $\E|M|^\kappa\ln^+|M|<\infty$ and $0<\E|Q|^\kappa<\infty$ (supplemented with some additional assumption of nondegeneracy and that the distribution of $\ln|M|$ given $M\neq 0$ is nonarithmetic), then there exists a positive constant $C$ such that
$$\P(|R| > x) \sim Cx^{-\kappa}.$$
Throughout the paper, the symbol $f(x)\sim g(x)$ means that $f(x)/g(x)\to1$ as $x\to\infty$.  For representations and bounds for $C$, see \citep{Gol91, ESZ09, BDZ, DLNT}.
Note that such $\kappa$ exists only if $\P(|M|>1)>0$.
The complementary case, $\P(|M|\leq1)= 1$, is not so well understood and the exact asymptotic of $\ln\P(R>x)$ is known in special cases only. The results in this area with references are given in Section \ref{known}.

In the present paper, we find the asymptotic behaviour of $\ln\P(R>x)$, when $Q$ is degenerate and $\P(0\leq M\leq1)=1$ with some additional weak assumptions on the tail of $M$. Under such conditions, we were able to link the behaviour of $M$ near $1$ with the log-tail of $R$. In particular, we show that if $Q=1$ with probability $1$ and $\ln\P(M>1-1/x)\sim-cx^{\alpha-1}$ for some $\alpha>1$ and $c>0$, then $R$ exhibits Weibull-like tails: 
$$\ln\P(R>x)\sim -c\beta^{\alpha-1} x^\alpha,$$ 
where $\beta>1$ is such that $\alpha^{-1}+\beta^{-1}=1$. We also cover the cases when $x\mapsto -\ln\P(M>1-1/x)$ is slowly varying or belongs to the class $\Gamma$ of rapidly varying functions. 

In the case of $Q$ degenerate, finding the asymptotic of the tail $\P(R>x)$ is a very hard task, which, at the moment, has been solved only when $M^\alpha$ is uniformly distributed, $\alpha>0$. 
However, the knowledge of asymptotics of log-tail of $R$ is now fairly complete. At last, we would like to stress that systematic study of the tail of $R$ under the assumption that $|M|\leq 1$ with probability $1$ is not as limiting as it would appear at the first glance and this is due to following observation from \citep{ver79}.
\begin{thm}
Let $R$ be a solution of \eqref{defperp} for some $(Q,M)$, such that $\P(|M|\neq 1)>0$ and $\P(M=0)=0$. Then for all $\varepsilon>0$ there exists a random pair $(Q',M')$ such that $\P(0<M'<\varepsilon)=1$ and $R$ is a solution of \eqref{defperp} with $(Q',M')$ instead of $(Q,M)$.
\end{thm}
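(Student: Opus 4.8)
The plan is to iterate the distributional identity \eqref{defperp} a \emph{random} number of times, the number of iterations being chosen as a stopping time so that the accumulated discount factor lands in $(0,\varepsilon)$; this amounts to reading the series \eqref{perpd} only up to a random index.

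If $R$ is a.s.\ constant the claim is immediate (take $M'\equiv\varepsilon/2$ and $Q'$ the corresponding constant), so assume $R$ is non-degenerate. Since $R$ is a solution of \eqref{defperp}, it is classical (see \citep{ver79,GM00}) that under $\P(|M|\neq1)>0$ and $\P(M=0)=0$ one must have $\E\ln|M|<0$ and that the series in \eqref{perpd} converges almost surely to a variable with the law of $R$ --- the hypotheses serving exactly to exclude the borderline cases $|M|=1$ a.s.\ in which the conclusion may fail. So I would work on a probability space carrying i.i.d.\ copies $(Q_k,M_k)_{k\ge1}$ of $(Q,M)$, set $\mathcal F_n:=\sigma\big((Q_k,M_k):k\le n\big)$,
\begin{equation*}
\Pi_n:=\sum_{j=1}^{n}Q_j\prod_{k=1}^{j-1}M_k,\qquad P_n:=\prod_{k=1}^{n}M_k,\qquad R^{(n)}:=\sum_{j>n}Q_j\prod_{n<k<j}M_k,
\end{equation*}
so that $R^{(n)}$ is independent of $\mathcal F_n$, is distributed as $R$, and $R=\Pi_n+P_nR^{(n)}$ holds pathwise for every $n$ at once (with $P_n\to0$ a.s.). A routine conditioning on the events $\{N=n\}$ then shows that for \emph{any} a.s.\ finite $(\mathcal F_n)$-stopping time $N$ one still has $R=\Pi_N+P_NR^{(N)}$ with $R^{(N)}\stackrel{d}{=}R$ independent of $(\Pi_N,P_N)$; hence $(Q',M'):=(\Pi_N,P_N)$ is, whatever the choice of $N$, a pair for which $R$ solves \eqref{defperp}.

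It remains to design $N$ with $\P(0<P_N<\varepsilon)=1$. Write $P_n=(-1)^{K_n}e^{S_n}$ where $S_n:=\sum_{k\le n}\ln|M_k|$ and $K_n:=\#\{k\le n:M_k<0\}$ (legitimate since $M_k\neq0$ a.s.). As $P_n\to0$ a.s.\ we get $S_n\to-\infty$ a.s., so $e^{S_n}<\varepsilon$ for all large $n$; and $K_n$, being a sum of i.i.d.\ $\{0,1\}$-valued variables, is either identically $0$ or increases in unit steps to $+\infty$, so in either case $K_n$ is even for infinitely many $n$. Therefore $N:=\min\{n\ge1:e^{S_n}<\varepsilon\ \text{and}\ K_n\ \text{even}\}$ is an a.s.\ finite $(\mathcal F_n)$-stopping time, and by construction $P_N=e^{S_N}\in(0,\varepsilon)$. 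Taking $(Q',M')=(\Pi_N,P_N)$ finishes the proof.

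I expect the only genuinely delicate ingredient to be what is quoted above as ``classical'': turning the distributional equation into the simultaneous pathwise identities $R=\Pi_n+P_nR^{(n)}$ on a single space (so that a stopping time may be substituted for the deterministic number of iterations) and, with it, $P_n\to0$ a.s. Everything after that is elementary, the only minor care being the parity bookkeeping that makes $M'=P_N$ positive rather than merely small in modulus.
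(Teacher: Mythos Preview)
The paper does not supply its own proof of this theorem; it is quoted as an observation due to Vervaat \citep{ver79}, and the argument you give is essentially the standard one: iterate the fixed-point equation a random (stopping-time) number of steps until the accumulated product of the $M_k$ is small and positive, then read off $(Q',M')$ as the partial sum and partial product at that time. Your strong-Markov computation showing that $R^{(N)}$ is independent of $(\Pi_N,P_N)$ with the law of $R$ is correct, and the parity bookkeeping (forcing $K_N$ even so that $P_N>0$) is handled properly.

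One small imprecision: you assert that ``one must have $\E\ln|M|<0$'', but this need not hold under the stated hypotheses --- the expectation may well be $-\infty$ or undefined. What you actually \emph{use}, and what does follow from \citep{ver79,GM00} once a non-degenerate solution exists with $\P(|M|\neq1)>0$ and $\P(M=0)=0$, is the weaker fact $P_n\to0$ a.s.\ (equivalently $S_n\to-\infty$ a.s.). Since that is all your stopping time $N$ requires, the proof goes through once you replace the overclaim by this statement.
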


The paper is organized as follows. We start in the next section with basic definitions and theorems
regarding regular and rapid variation. In Section \ref{known}, we briefly recall some known results regarding the asymptotic behaviour of the tail of $R$ in the case $\P(|M|\leq1)=1$. Statement of the main result is given in Section \ref{secmain}. For suitably chosen $f$, we independently prove the upper bound $$\limsup_{x\to\infty}\frac{\ln\P(R>x)}{f(x)}\leq B$$ and the lower bound $$\liminf_{x\to\infty}\frac{\ln\P(R>x)}{f(x)}\geq B$$ for some constant $B$. The proof of the upper bound is based on an inductive argument given in \citep{HitWes09} (Section \ref{secup}), while for the lower bound we improve the Goldie--Gr{\"u}bel inequality (Section \ref{lower}).
Section \ref{remarks} is devoted to the study of negative $M$ and the case not covered by Theorem \ref{mainr}, that is, when $x\mapsto -\ln\P(M>1-1/x)$ is rapidly varying, but does not belong to the class $\Gamma$.

\section{Regular variation}\label{Preli}
In this section we give a brief introduction to the theory of regular and rapid variation. For further details we refer to Bingham et al. \citep{BGT89}.

A measurable function $f\colon(0,\infty)\to(0,\infty)$ is called \emph{regularly varying with index $\rho$} (denoted $f\in R(\rho)$), $|\rho|<\infty$, if for all $\lambda>0$,
\begin{align}\label{reg}
\lim_{x\to\infty}\frac{f(\lambda x)}{f(x)}=\lambda^\rho.
\end{align}
The convergence above is locally uniform (\cite[Theorem 1.5.2]{BGT89}).
If $f\in R(0)$, then $f$ is called a \emph{slowly varying function}. The class of slowly varying functions is a fundamental part of the Karamata's theory of regular variability, since if $f\in R(\rho)$, then $f(x)=x^\rho L(x)$, where $L\in R(0)$. 

We say that a positive function varies smoothly with index $\rho$ ($f\in SR(\rho)$), if $f\in C^\infty$ and for all $n\in\mathbb{N}$,
$$\lim_{x\to\infty}\frac{x^n f^{(n)}(x)}{f(x)}=\rho(\rho-1)\ldots(\rho-n+1).$$
In particular, if $f\in SR(\rho)$ then $x^2 f''(x)/f(x)\sim \rho(\rho-1)$, hence for $\rho>1$ second derivative $f''(x)$ is positive for large $x$ so $f$ is ultimately strictly convex (ultimately here and later means ``for large values of the argument'').
By the Smooth Variation Theorem for any $f\in R(\rho)$, there exist $\lf, \uf\in SR(\rho)\subset C^\infty$ with $\lf(x)\sim \uf(x)$ and $\lf\leq f\leq \uf$ on a neighbourhood of infinity.

If $f\in R(\rho)$, $\rho>0$, then for any $A>1$ and $\delta\in(0,\rho)$, there exist $X=X(A,\delta)$ such that (Potter's theorem)
$$\frac{1}{A}\left( \frac{y}{x}\right)^{\rho-\delta}\leq \frac{f(y)}{f(x)}\leq A \left( \frac{y}{x}\right)^{\rho+\delta},\qquad y>x>X.$$

Potter's theorem implies following easy lemma, which will be very useful later on.
\begin{lemma}\label{lem1}
If $f\in R(\rho)$, $\rho> 0$, $g(x)\to\infty$ as $x\to\infty$ and $$\lim_{x\to\infty}\frac{f(x)}{f(g(x))}=L,$$ then 
$\lim_{x\to\infty}x/g(x)=L^{1/\rho}.$
\end{lemma}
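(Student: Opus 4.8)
The plan is to reduce the statement to a direct application of Potter's theorem. Set $L>0$ and suppose first that $L\in(0,\infty)$; the boundary cases $L=0$ and $L=\infty$ will be handled by a monotonicity/squeezing argument at the end. Write $r(x)=x/g(x)$; the goal is to show $r(x)\to L^{1/\rho}$.

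First I would argue that $g(x)\to\infty$ together with the hypothesis forces $r(x)$ to be eventually bounded and bounded away from zero. Indeed, fix $A>1$ and $\delta\in(0,\rho)$ and let $X=X(A,\delta)$ be the threshold from Potter's theorem, so that
\[
\frac{1}{A}\Bigl(\frac{y}{x}\Bigr)^{\rho-\delta}\leq\frac{f(y)}{f(x)}\leq A\Bigl(\frac{y}{x}\Bigr)^{\rho+\delta},\qquad y>x>X.
\]
For $x$ large enough we have both $x>X$ and $g(x)>X$. Applying the two-sided bound with the roles of $x$ and $g(x)$ in whichever order makes $\max(x,g(x))$ play the part of the larger argument, one gets, for all large $x$,
\[
\frac{1}{A}\min\bigl(r(x)^{\rho-\delta},r(x)^{\rho+\delta}\bigr)\;\leq\;\frac{f(x)}{f(g(x))}\;\leq\;A\max\bigl(r(x)^{\rho-\delta},r(x)^{\rho+\delta}\bigr).
\]
Since the middle quantity converges to $L\in(0,\infty)$, it is eventually trapped in a compact subinterval of $(0,\infty)$, and the displayed inequalities then confine $r(x)$ to a compact subinterval of $(0,\infty)$ as well (here $\rho>0$ and $\rho\pm\delta>0$ are what make $t\mapsto t^{\rho\pm\delta}$ proper on $(0,\infty)$).

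Next I would pin down the limit. Given that $r(x)$ is eventually in a compact set, pass to an arbitrary convergent subsequence $r(x_n)\to\ell\in(0,\infty)$. Along this subsequence $x_n\to\infty$ and $g(x_n)=x_n/r(x_n)\to\infty$, so $f(x_n)/f(g(x_n))=f(g(x_n)\cdot r(x_n))/f(g(x_n))\to\ell^{\rho}$ by the definition of regular variation (local uniformity of \eqref{reg} lets us replace the fixed $\lambda$ by the convergent sequence $r(x_n)\to\ell$). But the left side converges to $L$, so $\ell^{\rho}=L$, i.e. $\ell=L^{1/\rho}$. Since every convergent subsequence of the eventually-bounded sequence $r(x)$ has the same limit $L^{1/\rho}$, we conclude $r(x)\to L^{1/\rho}$, which is the claim.

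Finally, the cases $L=0$ and $L=\infty$: by the Karamata representation $f(x)=x^{\rho}L_0(x)$ with $L_0$ slowly varying, and $\rho>0$, one can compare with any fixed $\varepsilon>0$. If $L=\infty$, then for every $\kappa>0$ eventually $f(x)/f(g(x))\geq\kappa$, and the lower Potter bound forces $r(x)^{\rho+\delta}\geq\kappa/A$, i.e. $r(x)\to\infty=L^{1/\rho}$; symmetrically if $L=0$ the upper bound forces $r(x)\to 0$. I expect the only mildly delicate point to be the bookkeeping in the first step — namely applying Potter's theorem correctly when one does not know a priori whether $x$ or $g(x)$ is the larger argument — but this is handled uniformly by the $\min/\max$ formulation above, so there is no real obstacle.
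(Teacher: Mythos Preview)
Your argument is correct and follows essentially the same route as the paper's proof: use Potter's bounds to trap $x/g(x)$ in a compact subset of $(0,\infty)$, then pass to subsequences and invoke the local uniformity of \eqref{reg} to identify every subsequential limit as $L^{1/\rho}$. The only differences are cosmetic---you spell out the $\min/\max$ bookkeeping for which of $x,g(x)$ is larger and you add the boundary cases $L\in\{0,\infty\}$, which the paper does not treat (and does not need for its application).
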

\begin{proof}
First observe that Potter bounds imply the existence of constants $\lambda_1, \lambda_2>0$ such that for $x$ large enough $\lambda_1\leq \frac{x}{g(x)} \leq \lambda_2$. Let $x_n\to\infty$.
From any sequence $y_n=x_n/g(x_n)$, one may select a convergent subsequence $(y_{n_k})_k$, that is, $y_{n_k}=x_{n_k}/g(x_{n_k})$ converges to $\lambda\in[\lambda_{1},\lambda_2]$ say.
Since the convergence in \eqref{reg} is uniform, one has
$f(x_{n_k})/f(g(x_{n_k}))=f(y_{n_k} g(x_{n_k}))/f(g(x_{n_k}))\to \lambda^\rho.$
Thus, $\lambda^\rho=L$, and so $x/g(x)$ converges to $\lambda=L^{1/\rho}$.
\end{proof}

If $f\colon(0,\infty)\to(0,\infty)$ is measurable and
\begin{align*}
\lim_{x\to\infty}\frac{f(\lambda x)}{f(x)}=\begin{cases}\infty, & \lambda>1 \\ 1, & \lambda=1, \\ 0, & 0<\lambda<1, \end{cases}
\end{align*}
we call $f$ \emph{rapidly varying} (denoted $f\in R(\infty)$). The rapidly varying functions, however, in general do not possess properties that we need in the proof, therefore we will restrict our considerations to a subclass of $R(\infty)$ called $\Gamma$. This class appears in a natural way when dealing with convergence in distribution of sequences of partial maxima of independent, identically distributed random variables: if $X_1, X_2,\ldots$ are independent random variables with a common distribution $F$ (assume for simplicity $F(x)<1$ for all $x$), then there exist sequences of real constants $(a_n)$ and $(b_n)$ such that
$$\P(a_n(\max_{k\leq n}X_k-b_n)\leq x)\to\exp(-e^{-x}),\qquad\mbox{ as }n\to\infty$$
if and only if the function $U(x)=1/(1-F(x))$ is in $\Gamma$ (Gnedenko \citep{Gne43}).

The class $\Gamma$ consists of nondecreasing and right-continuous functions $f$ for which there exists a measurable function $g\colon \RR\to(0,\infty)$ such that (see \cite[Section 3.10]{BGT89})
\begin{align}\label{Gamma}
\lim_{x\to\infty}\frac{f\left(x+ug(x)\right)}{f(x)}= e^u,\quad u\in\mathbb{R}.
\end{align}
It can be shown that for $f\in\Gamma$ and any $\lambda>1$ we have $f(\lambda x)/f(x)\stackrel{x\to\infty}{\longrightarrow}\infty$, thus $\Gamma\subset R(\infty)$ except that the domain of definition of functions in $\Gamma$ is $\RR$, while those of $R(\infty)$ are $(0,\infty)$.

Function $g$ in \eqref{Gamma} is called \emph{an auxiliary function} and if $f$ has nondecreasing positive derivative, then one may take $g=f/f'$. In such case $f'$ also belongs to $\Gamma$ with the same auxiliary function as $f$. Every $f\in\Gamma$ has a representation (\citep{BH72})
$$f(x)=\exp\left\{ \eta(x)+\int_0^x \frac{1}{b(t)}dt\right\},$$
where $\eta$ is measurable, $b$ is a positive differentiable function such that $\eta(x)\to d$ and $b'(x)\to 0$ as $x\to\infty$.
Obviously, 
$$h(x):=\exp\left\{ d+\int_0^x \frac{1}{b(t)}dt\right\}\sim f(x).$$
Moreover, $h$ is at least twice differentiable and $h''(x)=h(x)(1-b'(x))/b(x)^2$, thus $h$ is ultimately strictly convex. 

The class $\Gamma$ is very rich: If $f_1\in R(\rho)$, $\rho>0$ and $f_2\in\Gamma$, then $f_1\circ f_2\in\Gamma$ (\cite[Proposition 3.10.12]{BGT89}). The same holds if $f_1\in\Gamma$ and $f_2'\in R(\rho)$ with $\rho>-1$ or if $f_1, f_2'\in\Gamma$ (\cite[p.191]{BGT89}). An easy example of a function from $R(\infty)\setminus\Gamma$ is $f(x)=\exp(x-\cos x)$.

If $f\in\Gamma$ with auxiliary function $g$, then $f^{-1}$ (generalized inverse) is slowly varying and for $\lambda>0$,
\begin{align}\label{gg}
\lim_{x\to\infty}\frac{f^{-1}(\lambda x)-f^{-1}(x)}{g(f^{-1}(x))}=\ln\lambda.\end{align}

\section{Previous results}\label{known}
In the case of bounded $Q$ and $\P(|M|\leq1)=1$, the knowledge of asymptotic behaviour of the tail $\P(R>x)$ or log-tail $\ln\P(R>x)$ is very scarce. The fact that there are few examples of explicit solutions of \eqref{defperp} certainly does not help.

Asymptotic of the tail $\P(R>x)$ for the case $M=U^{1/\alpha}$, where $U\sim \mathrm{U}[0,1]$ and $\alpha>0$, was treated in \citep{ver72} with the use of asymptotic result from \citep{bru51}. If $\alpha=1$, then $\P(R-1>x)\sim \varrho(x)$, where $\varrho$ is the Dickman-de Bruijn function, which appears, for example, in the number theory.

Some more recent results appeared in \citep{GG96}, where the authors considered nondegenerate $Q$ and $M$ such that
$$\P(M=0)>0\quad\mbox{ or }\quad\E\ln^+ |Q|<\infty.$$
The distribution of $M$ is said to be equivalent at $1$ to uniform distribution $\mathrm{U}([0,1])$ if
for every $\varepsilon>0$ there exist positive constants $c$ and $C$ such that 
\begin{align}\label{eqiv}
c\overline{F}_{\mathrm{U}([0,1])}(1-\delta)\leq \overline{F}_M(1-\delta)\leq C\overline{F}_{\mathrm{U}([0,1])}(1-\delta),\qquad \forall\,\delta\in(0,\varepsilon].
\end{align}
Here and henceforth $\overline{F}$ denotes the tail function $\overline{F}=1-F$. If the distribution of $M$ is equivalent at $1$ to $\mathrm{U}([0,1])$
and $Q$ is such that for $q_+=\sup\{ q\colon \P(Q> q)>0\}$, one has $0<q_+\leq\infty$, then it was shown that 
\begin{align*}
\lim_{x\to\infty}\frac{\ln \P(R>x)}{x\ln x}=-\frac{1}{q_+}.
\end{align*}
When $Q$ is degenerate, this result follows from \citep{ver72}.

The most recent results come from \citep{HitWes09}, where three different distributions of $M$ along with degenerate and positive $Q$ were considered.
For instance, the authors showed that if $M$ has a distribution equivalent at $1$ (in the sense of \eqref{eqiv}) to $F(x)=1-\exp\{-\beta(-\ln(1-x))^\gamma\}$, for $\beta,\eta>0$, then 
$$\lim_{x\to\infty}\frac{\ln\P(R>x)}{\beta\frac{x}{q}(\ln x)^\eta}\to -1.$$
This is an example when $f(x):=-x\ln\P(M>1-1/x)=\beta x(\ln x)^\eta\in R(1)$.

Their next example is $f\in R(r)$, $r>1$. They showed that if the distribution of $M$ is equivalent at $1$ to a distribution with given density, then
$$-\infty<c_1\leq \liminf_{x\to\infty} \frac{\ln\P(R>x)}{x^r}\leq \limsup_{x\to\infty} \frac{\ln\P(R>x)}{x^r}\leq c_2<0,$$
but $c_2>c_1$ and this does not imply that the limit of $\ln\P(R>x)/x^r$ even exists. Their upper bound however is optimal, but this will follow from the present paper.

In the third example (this time $f$ comes from the class $\Gamma$) they showed that there exist a distribution of $M$ such that
$$\forall\,B>q\quad\limsup_{x\to\infty}\frac{\ln\P(R>x)}{B\exp(x/B)}\leq -\frac{1}{e}$$
and
$$\forall\,B<q\quad\liminf_{x\to\infty}\frac{\ln\P(R>x)}{B\exp(x/B)}\geq \frac{\ln(1-B/q)}{B}.$$
This result is even less satisfactory than the one from the previous example, because here one may not take the same comparison functions (setting $B=q$ one obtains $-\infty\leq \liminf\leq\limsup\leq -1/e$). 

Some general bounds for arbitrary distribution of $M$ can be obtained from the following result due to Hitczenko \citep{Hit10}: if $Q=q>0$ and $\P(0\leq M\leq 1)=1$, then for sufficiently large $x$,
\begin{align}\label{genhit}
2\ln 2\frac{x}{q}\ln \P\left(M>1-\frac{q}{2x}\right)\leq \ln\P(R>x)\leq 4\frac{x}{q}\ln\P\left(M>1-\frac{2q}{x}\right).
\end{align}
Heuristically, one would expect that $\ln\P(R>x)\sim c x/q\ln\P(1-q/x)$ and this actually happens to be the proper estimate, as it will be shown below.

The case of bounded $Q$ and $\P(|M|\leq 1-\varepsilon)=1$ for $\varepsilon>0$ is uninteresting, because then $\P(R>x)=0$ for $x$ large enough. In order to exclude this case from further considerations we will assume that 
\begin{align}\label{msup}\sup\{x\colon \P(M>x)>0\}=1.\end{align}

If $Q$ is not bounded, the relation between the tail of $R$ and the tail of $M$ in \eqref{genhit} is not always the case. Grincevi\'{c}ius \citep{Gri75} has shown (this result was improved and the proof corrected in \citep{Gre94}) that if $M$ is positive with probability $1$, $\E M^\alpha<1$, $\E M^{\alpha+\varepsilon}<\infty$ and $\P(Q>x)\in R(-\alpha)$ for $\alpha, \varepsilon>0$, then
$$\P(R>x)\sim \frac{1}{1-\E M^\alpha}\P(Q>x).$$
The case when $x\mapsto\P(Q>x)\in R(0)$ was treated recently in \citep{Dys15}.

The tail and log-tail asymptotics of a perpetuity $R=\int_{0}^\infty \exp(-X(s))ds$, where $X$ is a L\'evy process, was studied in \citep{MZ06,Riv12}.

Some bounds for the density as well as for the tail functions of $R$ were also obtained in \citep{KN08}.

\section{Main result}\label{secmain}
\begin{thm}\label{mainr}
Let $R$ satisfy \eqref{defperp} with $\P(M\in[0,1))=1$. Assume that \eqref{msup} holds and $Q=q>0$ with probability $1$.
Define $f(x)=-x\ln\P(M>1-1/x)$, $x\geq 1$.

If $f\in R(r^\ast)$, $r^\ast> 1$, then
\begin{align*}
\lim_{x\to\infty}\frac{\ln \P(R>x)}{f(\frac x{q})}=-r^{r^\ast-1},
\end{align*}
where $1/r+1/r^\ast=1$.

If $f\in \Gamma\subset R(\infty)$, then
\begin{align*}
\lim_{x\to\infty}\frac{\ln \P(R>x)}{f(\frac x{q})}=-e.
\end{align*}

If $f\in R(1)$ is ultimately strictly convex, then
\begin{align*}
\lim_{x\to\infty}\frac{\ln \P(R>x)}{f(\frac x{q})}=-1.
\end{align*}
\end{thm}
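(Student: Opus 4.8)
### Proof proposal

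The plan is to prove all three cases simultaneously by obtaining matching upper and lower bounds for $\ln\P(R>x)/f(x/q)$, with the constant $B$ equal to $-r^{r^\ast-1}$, $-e$, or $-1$ according to the regime. By rescaling $R\mapsto R/q$ (equivalently replacing $M$ by the same $M$ and $Q$ by $1$, or using the representation \eqref{perpd}), we may assume $q=1$; then $f(x)=-x\ln\P(M>1-1/x)$ and we must show $\ln\P(R>x)\sim B f(x)$.

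For the upper bound I would run the inductive argument of Hitczenko--Wesołowski \citep{HitWes09}. Since $R\stackrel{d}{=}1+MR'$ with $R'\stackrel{d}{=}R$ independent of $M$, for any threshold $t_x<1$ we have
\begin{align*}
\P(R>x)\leq \P(M>t_x)+\P(1+t_xR>x)=\P(M>t_x)+\P\bigl(R>(x-1)/t_x\bigr),
\end{align*}
and iterating this $n=n(x)$ times along a decreasing sequence of thresholds (equivalently, splitting $x$ into a sum of increments) gives $\P(R>x)\le\sum_{j}\P(M>t_j)$ up to the point where the surviving event becomes empty because $\P(R>y)=0$ for $y$ below the infimum of the support. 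Taking logarithms, the dominant term is a sum of the form $\sum_j \ln\P(M>1-1/u_j)=-\sum_j f(u_j)/u_j$; one then optimizes the choice of the increments $u_j$ (their number being essentially the number of steps needed to climb from $O(1)$ to $x$). The optimization is a discrete analogue of minimizing $\int f(u)/u\,du$ subject to $\sum (\text{increments})=x$, and by convexity of $f$ (guaranteed in the $R(r^\ast)$ case via the Smooth Variation Theorem, in the $\Gamma$ case via the representation $h(x)=\exp\{d+\int_0^x b(t)^{-1}dt\}$, and by hypothesis in the $R(1)$ case) the optimal schedule is geometric-like; Lemma \ref{lem1} and Potter's theorem (resp.\ \eqref{gg} and the auxiliary-function estimate \eqref{Gamma}) convert the resulting expression into $B f(x)(1+o(1))$, pinning down $B=-r^{r^\ast-1}$, $-e$, or $-1$.

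For the lower bound I would use (an improvement of) the Goldie--Grübel inequality: choosing a good deterministic block length $m=m(x)$, the event $\{M_1>1-\delta,\dots,M_m>1-\delta\}$ forces $\prod_{k=1}^{j-1}M_k\ge (1-\delta)^m$ for all $j\le m$, hence $R\ge\sum_{j=1}^m\prod_{k=1}^{j-1}M_k\ge m(1-\delta)^m$; taking $m\sim x$ and $\delta\sim 1/x$ (more precisely, $\delta$ scaled so that $m(1-\delta)^m\approx x$, which fixes the ratio $m\delta$) gives $\P(R>x)\ge \P(M>1-\delta)^m$, so $\ln\P(R>x)\ge m\ln\P(M>1-\delta)=-m f(1/\delta)\delta$. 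Optimizing the free parameter (the constant in $m\sim c x$, with $\delta$ determined by it) again reduces to a one-parameter convex optimization whose value matches the upper bound; here the regular/rapid variation of $f$ enters exactly as above, through Lemma \ref{lem1}, Potter bounds, or \eqref{gg}. The improvement over the naive Goldie--Grübel bound — needed to get the sharp constant rather than just the order — comes from not demanding all $M_k$ large but allowing a more efficient ``staircase'' of thresholds mirroring the upper-bound schedule.

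The main obstacle I anticipate is matching the constants: both bounds reduce to optimization problems that look superficially different (a sum $\sum f(u_j)/u_j$ with $\sum(\text{increments})=x$ versus $m f(1/\delta)\delta$ with $m(1-\delta)^m\approx x$), and showing they have the same optimal value $B$ requires carefully exploiting the scaling relation $f(\lambda x)/f(x)\to\lambda^{r^\ast}$ (or the $\Gamma$/$R(1)$ analogues) to identify, e.g., that in the $R(r^\ast)$ case the extremal geometric ratio is $r=r^\ast/(r^\ast-1)$ and the extremal value is $r^{r^\ast-1}f(x)$. Handling the three regimes uniformly is delicate because the optimal ``schedule'' degenerates differently: in the $\Gamma$ case the number of steps grows like $f^{-1}$-type quantities and one must use \eqref{gg} rather than Potter bounds, and the borderline $R(1)$ case is where convexity is not automatic and must be assumed, which is why it is stated separately.
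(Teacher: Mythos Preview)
Your lower-bound sketch is broadly aligned with the paper (Section~\ref{lower}): the paper too builds a staircase $x_n=1+(1-\delta_n)x_{n-1}$, obtains $\P(R>x_n)\ge\prod_{i=1}^n\P(M>1-\delta_i)$, applies Stolz--Ces\`aro to the resulting sum of logs, and optimizes the choice of $\delta_n$. You are right that constant thresholds do not give the sharp constant and that a varying schedule is needed.

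Your upper-bound argument, however, has a genuine gap. The iteration $\P(R>x)\le\P(M>t_x)+\P\bigl(R>(x-1)/t_x\bigr)$ yields after $n$ steps a \emph{sum}
\[
\P(R>x)\le\sum_{j=1}^n\P(M>t_j)+\P(R>x_n),
\]
which you record correctly; but you then assert that taking logarithms produces ``a sum of the form $\sum_j\ln\P(M>1-1/u_j)$''. This is wrong: $\ln\bigl(\sum_j p_j\bigr)$ is controlled by the \emph{largest} summand, not by $\sum_j\ln p_j$. The most the iteration can deliver is $\ln\P(R>x)\le\ln n+\max_j\ln\P(M>t_j)$, a single term, and to make that single term of size $-Bf(x)$ in the $R(r^\ast)$ case would force $\delta_j$ of order $x^{-r}$, at which point $x_{j+1}-x_j\approx\delta_jx_j-1<0$ and the iteration cannot drive $x_n\to\infty$. (There is a secondary confusion: the support of $R$ is $[1,\infty)$, so $\P(R>y)$ is never zero and the ``surviving event'' never becomes empty.) What you have written is essentially the lower-bound product structure transplanted onto an inequality that only gives a sum.

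The paper's upper bound (Section~\ref{secup}) works through moment generating functions --- this is what the Hitczenko--Weso{\l}owski induction actually is, not a tail recursion. One takes $\psi=\lf_B^\ast$ (the convex conjugate of a smooth version of $f$, rescaled by $B$) and shows that $I_\psi(z)=e^z\,\E e^{\psi(zM)-\psi(z)}\to0$ as $z\to\infty$; a patching argument then gives $\E e^{zR}\le e^{\psi(z)+D}$ for large $z$, and Markov's inequality yields $\ln\P(R>x)\le-\psi^\ast(x)+D\sim-Bf(x)$. The point is that the MGF induction is \emph{multiplicative} --- each step contributes a factor $I_\psi$ --- so after taking logs one genuinely obtains an additive structure, and the range of $B$ for which $I_\psi(z)\to0$ is exactly what pins down the constant. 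The three regimes differ only in the analysis needed to show $I_\psi(z)\to0$: Lemma~\ref{lem1} and Potter bounds for $R(r^\ast)$, the relation \eqref{gg} for $\Gamma$, and a direct estimate for $R(1)$.
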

\begin{remark}
Under the assumptions of Theorem \eqref{mainr}, the series in \eqref{perpd} converges and the perpetuity $R$ exists.
\end{remark}
\begin{remark}
Note that if $1/r+1/r^\ast=1$, then
$$\lim_{r^\ast\to1^+}r^{r^\ast-1}=1\quad\mbox{ and }\quad\lim_{r^\ast\to\infty}r^{r^\ast-1}=e,$$
therefore we have a kind of continuity of the constant here.
\end{remark}

Also, we have the following result, which complements Theorem \ref{mainr}.
\begin{Corollary}\label{ato}
Let $R$ satisfy \eqref{defperp} with $\P(M\in[0,1])=1$ and $\P(M=1)\in(0,1)$. If $Q=q>0$ with probability $1$, then
\begin{align*}
\lim_{x\to\infty}\frac{\ln \P(R>x)}{x}=\frac{1}{q}\ln\P(M=1).
\end{align*}
\end{Corollary}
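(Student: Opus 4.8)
The plan is to prove the two matching bounds $\liminf_{x\to\infty}\frac{\ln\P(R>x)}{x}\ge\frac1q\ln p$ and $\limsup_{x\to\infty}\frac{\ln\P(R>x)}{x}\le\frac1q\ln p$, where $p:=\P(M=1)\in(0,1)$. First note that \eqref{cond} holds here: $\E\ln^+Q=\ln^+q<\infty$, and $\E\ln M<0$ because $\ln M<0$ holds $\P$-a.s.\ on $\{M<1\}$, an event of probability $1-p>0$; hence $R$ is almost surely the sum of the series \eqref{perpd}. For the lower bound, observe that on the event $\{M_1=\dots=M_n=1\}$, which by independence has probability $p^{\,n}$, the first $n+1$ terms of \eqref{perpd} are each equal to $q$, so $R\ge q(n+1)>qn$ and thus $\P(R>qn)\ge p^{\,n}$. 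Taking $n=\lceil x/q\rceil$ gives $\ln\P(R>x)\ge\lceil x/q\rceil\ln p$, and dividing by $x$ and letting $x\to\infty$ (recall $\ln p<0$) yields the required $\liminf$.

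For the upper bound I would show that $\E[\theta^R]<\infty$ for every $\theta\in(1,p^{-1/q})$; by Markov's inequality this gives $\ln\P(R>x)\le\ln\E[\theta^R]-x\ln\theta$, hence $\limsup_{x\to\infty}\frac{\ln\P(R>x)}{x}\le-\ln\theta$, and letting $\theta\uparrow p^{-1/q}$ completes the argument. To compute $\E[\theta^R]$, decompose the i.i.d.\ sequence $(M_k)$ into maximal runs of $1$'s of lengths $G_0,G_1,\dots$, which are i.i.d.\ with $\P(G_i=k)=p^{\,k}(1-p)$, separated by the non-$1$ values $V_1,V_2,\dots$, which are i.i.d.\ with the law of $M$ conditioned on $\{M<1\}$ and thus supported in $[0,1)$, all these variables being independent. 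Substituting this representation into \eqref{perpd} and grouping the terms with equal partial product $\prod_{k=1}^{j-1}M_k$ gives
\[
R\stackrel{d}{=}q\sum_{i\ge0}(G_i+1)\,W_i,\qquad W_i:=V_1\cdots V_i\quad(W_0:=1).
\]
Conditioning on $(V_i)_{i\ge1}$ and using $\E[s^{G+1}]=s(1-p)/(1-ps)$ for $ps<1$, which applies since $W_i\le1$ and $p\theta^q<1$, one obtains
\[
\E[\theta^R]=\E\Bigl[\prod_{i\ge0}g(W_i)\Bigr],\qquad g(w):=\frac{\theta^{qw}(1-p)}{1-p\theta^{qw}}.
\]

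Now $g$ is continuous on $[0,1]$ with $g(0)=1$, hence Lipschitz there, so $g(w)\le e^{Lw}$ for some finite $L=L(\theta)$ and all $w\in[0,1]$; consequently $\E[\theta^R]\le\E[e^{LS}]$ with $S:=\sum_{i\ge0}W_i$. But $S$ itself satisfies the perpetuity equation $S\stackrel{d}{=}1+VS'$ with $S'$ an independent copy of $S$, i.e.\ $S$ is the perpetuity with unit payment whose discount factor $V$ is strictly below $1$ almost surely. Therefore $\P(V>1-2/x)\to\P(V\ge1)=0$, and the general estimate \eqref{genhit}, applied with $q=1$ and $M=V$, forces $\ln\P(S>x)/x\to-\infty$. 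Thus $S$ has tails lighter than any exponential, so $\E[e^{LS}]<\infty$ for every $L$, which gives $\E[\theta^R]<\infty$ and finishes the proof.

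The one genuinely new ingredient — and hence the step I expect to be the crux — is the block decomposition together with the reduction of $\E[\theta^R]$ to the moment generating function of the auxiliary perpetuity $S$ driven by $V$. The useful point is that $V<1$ needs to hold only almost surely (its essential supremum may well equal $1$), yet this already forces $\P(V>1-2/x)\to0$, which is exactly what \eqref{genhit} requires to conclude that $S$ is super-exponentially light. Everything else is routine, provided one is careful to justify the manipulations with infinite products and conditional expectations by monotone convergence, using that each factor satisfies $g(W_i)\ge1$ and that $W_i\to0$ almost surely.
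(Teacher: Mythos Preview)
Your lower bound is essentially the paper's argument verbatim. For the upper bound, both the paper and you reduce to showing $\E e^{\lambda R}<\infty$ for every $\lambda<-\tfrac1q\ln p$ and then apply Markov's inequality; the difference is that the paper simply cites \cite[Theorem~1.7]{AIR09} for this finiteness, whereas you supply a self-contained proof. Your route---splitting the i.i.d.\ sequence $(M_k)$ into geometric runs of $1$'s separated by the conditioned values $V_i$, rewriting $R\stackrel{d}{=}q\sum_{i\ge0}(G_i+1)W_i$, computing the conditional m.g.f.\ as $\prod_i g(W_i)$, and dominating by $e^{LS}$ for the auxiliary perpetuity $S$ driven by $V<1$---is correct (the monotone-convergence justifications you flag are exactly what is needed, and $g(w)\ge1$ follows from $\theta^{qw}\ge1$). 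It trades the black-box reference \cite{AIR09} for the bound \eqref{genhit}, and in return gives a transparent probabilistic explanation of why the exponential rate is precisely $\tfrac1q\ln p$: the geometric runs account for it exactly, while the $V_i$-contribution is sub-exponential because $\P(V>1-2/x)\to0$. The paper's version is much shorter but hides this structure.
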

\begin{proof}
From \citep[Theorem 1.7]{AIR09} specialized to our case, it follows that $\E e^{\lambda R}$ exists if $\lambda< -\ln \P(M=1)/q$. 
By Markov's inequality we have for $\lambda>0$.
$$\P(R>x)\leq e^{-\lambda x} \E e^{\lambda R}.$$
Taking the logarithms of both sides, dividing by $x$, taking $\limsup_{x\to\infty}$ and passing with $\lambda$ to $-\ln \P(M=1)/q$, we obtain
$$\limsup_{x\to\infty} \frac{\ln\P(R>x)}{x}\leq \frac 1q\ln\P(M=1).$$

Recall that $R\stackrel{d}{=}q\sum_{j=1}^\infty \prod_{k=1}^{j-1} M_k$ and let $R_n^\ast:=q\sum_{j=1}^{n+1} \prod_{k=1}^{j-1} M_k$. Since $\P(R_n^\ast=q(n+1))=\P(M=1)^n$, we have
\begin{align*}
\P(R> x)\geq \P(R_{\left\lceil \frac{x}{q}\right\rceil}^\ast> x)\geq \P(M=1)^{\left\lceil \frac{x}{q}\right\rceil},
\end{align*}
therefore,
$$\liminf_{x\to\infty} \frac{\ln\P(R>x)}{x}\geq \frac1q\ln\P(M=1).$$
\end{proof}
Note that an upper bound under the assumptions of Corollary \ref{ato} was considered in \citep[Corollary 2.2]{GG96}, but there it was shown there only that 
$$\limsup_{x\to\infty} \frac{\ln\P(R>x)}{x} \leq \frac1q\ln \E M.$$

\section{Upper bound}\label{secup}
In this section, we will prove that $$\limsup_{x\to\infty}\frac{\ln\P(R>x)}{-\frac xq\ln\P(M>1-\frac qx)}\leq -B$$ for some constant $B$.
We start with the following easy result with large deviations' flavour, which can be proved quickly by Markov's inequality.
\begin{proposition}\label{mark}
Suppose that 
$$\E e^{zX}\leq e^{f(z)}, \qquad z>0,$$
for some function $f\colon\RR_+\to\RR_+$. Then
$$\ln\P(X>x)\leq -f^\ast(x),$$
where 
\begin{align}\label{conv}
f^\ast(x)=\sup\{zx-f(z)\colon z>0\}.
\end{align}
\end{proposition}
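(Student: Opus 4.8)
The statement to prove is Proposition~\ref{mark}, which is a textbook Chernoff bound and should have a short proof. Here is the plan.

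\medskip

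\textbf{Plan of proof.} The plan is to apply Markov's inequality to the random variable $e^{zX}$ for an arbitrary fixed $z>0$. First I would write, for any $z>0$ and any $x$,
$$\P(X>x)=\P(e^{zX}>e^{zx})\leq e^{-zx}\,\E e^{zX}\leq e^{-zx}e^{f(z)}=e^{-(zx-f(z))},$$
where the first inequality is Markov's inequality applied to the nonnegative random variable $e^{zX}$, and the second is the hypothesis $\E e^{zX}\leq e^{f(z)}$. Taking logarithms gives $\ln\P(X>x)\leq -(zx-f(z))$ for every $z>0$. Since the left-hand side does not depend on $z$, I can take the supremum over $z>0$ of the right-hand side, obtaining $\ln\P(X>x)\leq -\sup\{zx-f(z)\colon z>0\}=-f^\ast(x)$, which is exactly the claimed bound with $f^\ast$ as in \eqref{conv}.

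\medskip

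\textbf{Remarks on the argument.} There is essentially no obstacle here: the proof is two lines once Markov's inequality is invoked, and the role of the hypothesis $f\colon\RR_+\to\RR_+$ is only to guarantee that $e^{f(z)}$ is finite so the bound is meaningful (and that the exponential moment $\E e^{zX}$ is finite). One minor point worth a sentence is that $f^\ast(x)$ could in principle be $+\infty$ for some $x$, in which case the inequality $\ln\P(X>x)\leq-\infty$ simply asserts $\P(X>x)=0$, which is still consistent with the per-$z$ bounds; so no case distinction is really needed. The Legendre--Fenchel transform notation in \eqref{conv} is standard, and the supremum being taken over $z>0$ (rather than all of $\RR$) matches the fact that the hypothesis is only assumed for $z>0$, which is all that is needed for upper-tail bounds.

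\medskip

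In the context of the paper, this proposition will be combined with suitable upper bounds on $\E e^{zR}$ (coming from the stochastic fixed-point equation $R\stackrel{d}{=}q+MR$, which lets one control the moment generating function of $R$ recursively in the spirit of \citep{HitWes09}) to produce the upper bound $\limsup_{x\to\infty}\ln\P(R>x)\big/\big(-\tfrac xq\ln\P(M>1-\tfrac qx)\big)\leq -B$. So the real work lies not in Proposition~\ref{mark} itself but in establishing a good bound of the form $\E e^{zR}\leq e^{f(z)}$ with an $f$ whose Legendre transform $f^\ast$ has the right growth; the proposition is merely the clean interface that converts such a moment bound into a tail bound.
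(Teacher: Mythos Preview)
Your proof is correct and is exactly the approach the paper has in mind: the paper does not spell out a proof at all, noting only that the result ``can be proved quickly by Markov's inequality,'' which is precisely what you do.
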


The function $f^\ast$ is called the \emph{convex conjugate} of $f$. By the very definition $f^\ast$ is convex and $f(x)+f^\ast(z)\geq xz$ for any $x,z>0$ (Young's inequality). Note that Young's inequality implies that $f^\ast(x)/x\to\infty$ as $x\to\infty$. If $f$ is convex and lower-semicontinuous, then $f^{\ast\ast}=f$ (\citep{Roc70}). 
Convex-conjugacy is order-reversing, that is, if $f\leq g$, then $f^\ast\geq g^\ast$.

If $f$ is differentiable and strictly convex, then the supremum \eqref{conv} is attained at $z=(f')^{-1}(x)$ and thus 
$f^\ast(x)=x(f')^{-1}(x)-f((f')^{-1}(x)).$
Moreover, $f'\circ (f^\ast)'=(f^\ast)'\circ f'=\mathrm{Id}$. Thus,
\begin{align}\label{eqstar}
f^\ast(x)=x(f^\ast)'(x)-f((f^\ast)'(x)).
\end{align}

We begin with a proof of the upper bound.
Let 
	$$f(x):=\begin{cases}-x\ln\P(M>1-\frac1x), & x\geq1 \\ 0, & x<1.\end{cases}$$
The function $f$ is right-continuous and strictly increasing. Without loss of generality, we may assume that $Q=1$ with probability $1$, that is, $q=1$.

Hitczenko and Weso{\l}owski \citep{HitWes09}, making use of \citep{GG96}, have developed a method for obtaining an upper bound for moment generating function of $R$, which then through Proposition \ref{mark} gives an upper bound for the tail of $R$. 
However, there is an important nuance in their inductive argument.
First, they show that if for some function $\psi$,
\begin{align}\label{HW}I_\psi(z):=e^z\E e^{\psi(zM)-\psi(z)}\leq 1\end{align}
for all $z>0$, then the inductive assumption $\E e^{z R_n}\leq e^{\psi(z)}$ for all $z>0$ holds for every $n$.
Indeed,
$$\E e^{z R_{n+1}}=e^{z}\E e^{z M_{n+1} R_{n}}\leq e^{z} \E e^{\psi(zM)}\leq e^{\psi(z)}.$$
There is no problem with starting the induction, since $R_0$ may be taken arbitrary.
The authors of \citep{HitWes09} later state that it is enough to assume that \eqref{HW} holds for large values of $z$ only ($z>N_0$, say). If it is so, we are able only to show that $\E \exp(zR_1)\leq \exp(\psi(z))$ for $z>N_1>N_0$. This then implies that $\E \exp(z R_{n})\leq \exp(\psi(z))$ holds for large $z$ and the lower bound for such $z$'s may depend on $n$. Therefore, letting $n\to\infty$ does not justify the upper bound for $\E \exp(z R)$.
Note that if \eqref{HW} holds for large $z$, this does not imply that it holds for all $z>0$. 

Luckily, there is a way out of this situation. Assume that $\P(M=1)=0$ (only such situation will be considered using the following scheme). We will show that there exists a constant $D$ such that $\E \exp(z R)\leq \exp(\psi(z)+D)$ for large $z$.
Let $N>0$ be such that \eqref{HW} holds for $z>N$. Define $\widetilde{\psi}(x)=a x-D$ for $x\in[0,N]$ and $\widetilde{\psi}(x)=\psi(x)$ for $x>N$. 
If $a>1/(1-\E M)$, then $I_{\widetilde{\psi}}'(0)=1-a+a\E M<0$, thus there exists $\varepsilon>0$ such that $I_{\widetilde{\psi}}(z)\leq 1$ for $z\in[0,\varepsilon)$.
For $z\in[\varepsilon,N]$ we have
$$I_{\widetilde{\psi}}(z)\leq e^N \E e^{-a \varepsilon(1-M)}$$
and the right-hand side tends to $0$ as $a\to\infty$, thus for $a$ large enough one has $I_{\widetilde{\psi}}(z)\leq 1$ for all $z\in[0,N]$.
On the other hand, for $z>N$ one has
$$I_{\widetilde{\psi}}(z)=I_\psi(z)+ e^z \E\left(e^{a z M-\psi(z)-D}-e^{\psi(z M)-\psi(z)}\right)I(zM\leq N),$$
and one may choose $D$ in such a way that $a x-D<\psi(x)$ for any $x\in[0,N]$, so that the second term above is nonpositive. 
Thus, $I_{\widetilde{\psi}}(z)\leq 1$ for all $z>0$. The inductive argument holds for $\widetilde{\psi}$, and finally we obtain the desired bound
$$\E e^{z R}\leq e^{\psi(z)+D}$$
for large $z$ ($z>N$). Note that as $x\to\infty$ the supremum in \eqref{conv} is attained at $z\to\infty$, thus for large $x$ one has $\widetilde{\psi}^\ast(x)=(\psi+D)^\ast(x)=\psi^\ast(x)-D$. Proposition \ref{mark} then implies that
\begin{align}\label{eq3}
\limsup_{x\to\infty}\frac{\ln \P(R> x)}{\psi^\ast(x)}\leq -1.
\end{align}
To obtain this result, we will actually show much more:
$$\E e^{z+\psi(zM)-\psi(z)}\to 0,\quad\mbox{ as }z\to\infty$$
and this will be done using asymptotic properties of $\psi$ only.

The proof is divided into three cases:
\begin{description}
\item[Case I] $f\in R(r^\ast)$, $r^\ast>1$,
\item[Case II] $f\in \Gamma$,
\item[Case III] $f\in R(1)$.
\end{description}

\subsection*{Case I}
We will show that for some $B>0$,
\begin{align}\label{eq1}
I(z):=\E \exp\{\lf_B^\ast(zM)-\lf_B^\ast(z)+z\}
\end{align}
converges to $0$ as $z\to\infty$, where $\lf\in SR(r^\ast)$, as in the Smooth Variation Theorem, is such that $\lf(x)\sim f(x)$, $\lf\leq f$  and $\lf_B^\ast(x):=B \lf^\ast(x/B)$. Note that $\lf_B^\ast$ may be chosen to be nondecreasing.

Let $F_M$ denote the cumulative distribution function of $M$. For $\varepsilon_1\in(0,1)$, we have
\begin{multline*}
I(z)
\leq \exp\left\{\lf_B^\ast(z(1-\varepsilon_1))-\lf_B^\ast(z)+z\right\}F_M(1-\varepsilon_1)\\-\int_{1-\varepsilon_1}^1 \exp\{\lf_B^\ast(zx)-\lf_B^\ast(z)+z\}\,d\overline{F}_M(x).
\end{multline*}

Since $\lf\in SR(r^\ast)$ we have $\lf'\in SR(r^\ast-1)$, thus $(\lf^\ast)'\in SR(1/(r^\ast-1))$ and $\lf^\ast\in SR(r)$, where $1/r+1/r^\ast=1$ (see also \citep[Theorem 1.8.10]{BGT89}).

The first term converges to zero, since
\begin{align*}
\lf_B^\ast(z(1-\varepsilon_1))-\lf_B^\ast(z)+z=\lf_B^\ast(z)\left( \frac{z}{\lf_B^\ast(z)}+\frac{\lf_B^\ast(z(1-\varepsilon_1))}{\lf_B^\ast(z)}-1 \right)\to-\infty,
\end{align*}
and $\lim_{z\to\infty} \lf_B^\ast(z(1-\varepsilon_1))/\lf_B^\ast(z)=(1-\varepsilon_1)^{r}<1$. 
After integrating by parts (the integrand is continuous) and changing the variable $x\mapsto 1-t$, we obtain
\begin{align*}
I(z)\leq z(\lf_B^\ast)'(z)\int_0^{\varepsilon_1} \overline{F}_M(1-t) e^{\lf_B^\ast(z(1-t))-\lf_B^\ast(z)+z}\,dt+o(1).
\end{align*}
$\lf^\ast$ is ultimately convex, so, for $z$ large enough we have
$$\lf_B^\ast(z(1-t))-\lf_B^\ast(z)\leq-zt(\lf_B^\ast)'(z(1-t))\leq -zt(\lf_B^\ast)'(z(1-\varepsilon_1)).$$ 
Recall that $\overline{F}_M(1-t)=\exp\{-t f(\frac1t)\}\leq\exp\{-t \lf(\frac1t)\}$, thus
\begin{align}\label{eq4}\begin{split}
I(z)\leq \varepsilon_1 z&(\lf_B^\ast)'(z)\exp\left\{z \vphantom{\frac12} \right. \\
&\left.-\inf_{t\in(0,\varepsilon_1)}\left\{t\lf\left(\frac1t\right)+zt(\lf_B^\ast)'(z(1-\varepsilon_1))\right\}\right\}+o(1).
\end{split}\end{align}
The infimum is attained at the point $t_0=t_0(z)$ such that
\begin{align}\label{inf}
\frac1{t_0}\lf'\left(\frac1{t_0}\right)-\lf\left(\frac1{t_0}\right)=z(\lf_B^\ast)'(z(1-\varepsilon_1)).
\end{align}
Set $\lf'(1/t_0)=x$, that is, $1/t_0=(\lf^\ast)'(x)$. Substituting these into the left-hand side of \eqref{inf} shows that it equals 
$(\lf^\ast)'(x)x-\lf\left((\lf^\ast)'(x)\right)$, which, by \eqref{eqstar}, equals $\lf^\ast(x)$. Thus, \eqref{inf} reads 
\begin{align}\label{inf2}
\lf^\ast(x)=z(\lf^\ast)'\left(\frac{1-\varepsilon_1}{B}z\right).
\end{align}
For $z\to\infty$, we have $x=x(z)\to\infty$. Moreover, 
\begin{align}\label{zt}t_0\lf\left(\frac1{t_0}\right)+zt_0(\lf_B^\ast)'(z(1-\varepsilon_1))=x.\end{align}
Thus, \eqref{eq4} simplifies to
\begin{align}\label{eq4p}
I(z)\leq \varepsilon_1 z(\lf_B^\ast)'(z)e^{z-x}+o(1).
\end{align}

Since $\lf^\ast\in SR(r)$, $r>1$, we know that
$z(\lf^\ast)'(z)\sim r\lf^\ast(z)$. 
Hence
\begin{align*}\lf^\ast(x)=z(\lf^\ast)'\left(\frac{1-\varepsilon_1}{B}z\right)\sim r\left(\frac{1-\varepsilon_1}{B}\right)^{r-1} \lf^\ast(z)\end{align*}
and Lemma \eqref{lem1} implies that
\begin{align}\label{Br}
\frac{x}{z}\to \left(r\left(\frac{1-\varepsilon_1}{B}\right)^{r-1}\right)^{1/r}.
\end{align}
Combining \eqref{eq4p} and \eqref{Br}, we obtain
\begin{align*}
I(z)&\leq \varepsilon_1 z(\lf_B^\ast)'(z) \exp\left\{z\left(1-\frac{x}{z}\right)\right\}+o(1) \\
& =\varepsilon_1 z(\lf_B^\ast)'(z) \exp\left\{z\left(1- r^{1/r}\left(\frac{1-\varepsilon_1}{B}\right)^{1/r^\ast} + o(1) \right)\right\}+o(1),
\end{align*}
thus if $1- r^{1/r}((1-\varepsilon_1)/B)^{1/r^\ast}<0$, then $I(z)\to0$ as $z\to\infty$. This is true if $B<(1-\varepsilon_1)r^{r^\ast-1}$.
Passing to the limit as $\varepsilon_1\to0$ and $B\to {r^{r^\ast-1}}$, by \eqref{eq3} we obtain
\begin{align*}
\limsup_{x\to\infty}\frac{\ln \P(R>x)}{f(x)}=\limsup_{x\to\infty}\frac{\ln \P(R>x)}{\lf^{\ast\ast}(x)}\leq-r^{r^\ast-1}.
\end{align*}
The above equality follows from the fact that $\lf$ is ultimately strictly convex and thus $$\lf_B^{\ast\ast}(x)=B\lf(x)\sim B f(x).$$

\subsection*{Case II}
For $\varepsilon>0$, define $\lf(x)=e^{-\varepsilon}h(x)$, where $h(x)$ is defined as in Section \ref{Preli}. We have $h(x)\sim f(x)$, $h(x)$ is ultimately strictly convex and $\lf(x)\leq f(x)$ for large values of $x$.
Since $\lf'\in\Gamma$, we know that $(\lf')^{-1}=(\lf^\ast)'$ is slowly varying, and thus $\lf^\ast\in R(1)$.
Following the same approach as in Case I, we obtain (see \eqref{Br}, here $r=1$)
$$
\frac{x}{z}\to 1,
$$
thus in this case we need a more sophisticated approach.

Again using the fact that $(\lf')^{-1}=(\lf^\ast)'$, by \eqref{gg} we obtain
\begin{align}\label{psipl}
\lim_{x\to\infty}\frac{(\lf^\ast)'(\lambda x)-(\lf^\ast)'(x)}{g((\lf^\ast)'(x))}=\ln\lambda.
\end{align}
Moreover, $g=\lf/\lf'$, thus $g((\lf^\ast)'(x))=\lf((\lf^\ast)'(x))/x$

Recall that $\lf^\ast(x)=x(\lf^\ast)'(x)-\lf((\lf^{\ast})'(x))$. Hence, using \eqref{inf2} we get
\begin{align*}
z-x & =\frac{z \lf((\lf^\ast)'(x))}{\lf^\ast(x)}\left(\frac{x(\lf^\ast)'(x)-\lf((\lf^\ast)'(x))}{\lf((\lf^\ast)'(x))}-\frac{x z(\lf_B^\ast)'(z(1-\varepsilon_1))}{z \lf((\lf^\ast)'(x))} \right)\\
& =\frac{z \lf((\lf^\ast)'(x))}{\lf^\ast(x)}\left(\frac{x(\lf^\ast)'(x)-x(\lf_B^\ast)'(z(1-\varepsilon_1))}{ \lf((\lf^\ast)'(x))}-1 \right).
\end{align*}
Since $x\sim z$, \eqref{psipl} implies that
$$\frac{x(\lf^\ast)'(x)-x(\lf^\ast)'(\frac{1-\varepsilon_1}{B}z)}{ \lf((\lf^\ast)'(x))}\to\ln\frac{B}{1-\varepsilon_1}.$$
By \eqref{eq4p}, we have
$$I(z)\leq  \varepsilon_1 z(\lf_B^\ast)'(z) \exp\left\{\frac{z \lf((\lf^\ast)'(x))}{\lf^\ast(x)}\left(\ln\frac{B}{1-\varepsilon_1}-1+o(1)\right)\right\}+o(1).$$
Since $\frac{z \lf((\lf^\ast)'(x))}{\lf^\ast(x)}\sim\frac{\lf((\lf^\ast)'(x))}{(\lf^\ast)'(x)} \to\infty$, we see that if $\ln(B/(1-\varepsilon_1))-1<0$, then $I(z)\to0$. This happens if $B<(1-\varepsilon_1)e$. 

Passing to the limit as $\varepsilon_1\to0$ and $B\to e$, we obtain
\begin{align*}
\limsup_{x\to\infty}\frac{\ln \P(R>x)}{\lf(x)}\leq-e
\end{align*}
Since $\lf(x)\sim e^{-\varepsilon}f(x)$, we get 
\begin{align*}
\limsup_{x\to\infty}\frac{\ln \P(R>x)}{f(x)}\leq-e^{1-\varepsilon}.
\end{align*}
$\varepsilon\to0$ gives the result.

\subsection*{Case III}
Function $x\mapsto -\ln\overline{F}_M(1-\frac1x)$ is slowly varying (still, nondecreasing) and we need a different approach. The one we will present here has been inspired by \citep{HitWes09}, where a similar technique was used to solve the case $\ln\P(M>1-1/x)=-\beta(\ln x)^\eta$. 
We will prove that for $0<B<1$
\begin{align*}
I(z)=\E \exp\{\lf_B^\ast(zM)-\lf_B^\ast(z)+z\}
\end{align*}
converges to $0$ as $z\to\infty$, where $\lf_B^\ast(x):=B\lf^\ast(x/B)$ and $\lf\in SR(1)$ is as in Case I. The function $f$ is assumed to be ultimately strictly convex and this implies $\lf$ is also ultimately strictly convex.

We have for $\alpha\in(0,1)$,
\begin{align*}
I(z) & \leq e^z \int_{1-\alpha}^1 dF_M(x)+\exp\left\{z+\lf_B^\ast(z(1-\alpha))-\lf_B^\ast(z)\right\}\int_0^{1-\alpha} dF_M(x)\\
& \leq e^z \overline{F}_M(1-\alpha)+\exp\left\{z+\lf_B^\ast(z(1-\alpha))-\lf_B^\ast(z)\right\}\\
&\leq \exp\left\{z-\alpha \lf(\frac1\alpha)\right\}+\exp\left\{z-z\alpha(\lf_B^\ast)'(z(1-\alpha))\right\},
\end{align*}
where the last inequality follows from $\overline{F}_M(1-\alpha)=\exp\{-\alpha f(\frac1\alpha)\}\leq\exp\{-\alpha \lf(\frac1\alpha)\}$.

Define the function $\alpha=\alpha(z)$ by the identity $z-\alpha \lf(1/\alpha)=-\rho$, where $\rho>0$ is fixed. Obviously, $\alpha(z)\to0$ as $z\to\infty$. We will show that the second term above goes to $0$ for some $B>0$.
We are to show that
\begin{align}\label{B0}
\lim_{z\to\infty}\{1-\alpha( \lf_B^\ast)'(z(1-\alpha))\}<0.
\end{align}
This will happen if for $z$ large enough
\begin{align*}
\frac{1}{\alpha}<(\lf_B^\ast)'(z(1-\alpha)),
\intertext{which is equivalent to}
\lf'\left(\frac{1}{\alpha}\right)<\frac{z(1-\alpha)}{B}.
\end{align*}
By the definition of $\alpha(z)$ and Monotone Density Theorem $z\sim \alpha \lf(1/\alpha)\sim \lf'(1/\alpha)$, thus, if
$B<1$, \eqref{B0} holds true for large $z$ and we obtain
$$\lim_{z\to\infty} I(z)\leq e^{-\rho}\to 0,\qquad\mbox{ as }\rho\to\infty$$
and so (after taking $B\to1$)
\begin{align*}
\limsup_{x\to\infty}\frac{\ln \P(R>x)}{\lf^{\ast\ast}(x)}\leq-1.
\end{align*}
$f$ is ultimately strictly convex, thus $\lf$ is also ultimately strictly convex. This implies that $\lf^{\ast\ast}(x)=\lf(x)$ for large $x$ and since $\lf(x)\sim f(x)$, we finally obtain that
\begin{align*}\limsup_{x\to\infty}\frac{\ln \P(R>x)}{f(x)}\leq-1.
\end{align*}

\section{Lower bound}\label{lower}
We start with \eqref{it}, in which $(M_n)_n$ are i.i.d. copies of $M$ and $R_0=1$.
Again, without loss of generality we may assume that $Q=q=1$. 
Define the sequence
$$x_n=1+(1-\delta_{n})x_{n-1},\qquad n\geq 1,$$ 
with $x_0=1$ and $\delta_n\in(0,1)$ for each $n$. Then $\P(R_1>x_1)=\P(M_1>1-\delta_1)$ and
\begin{align*}
\P(R_n> x_n)&=\P\left(M_n R_{n-1}>(1-\delta_n)x_{n-1}\right)\\
&\geq \P(M_n> 1-\delta_n)\P(R_{n-1}>x_{n-1})\\
&\geq \prod_{i=1}^{n} \P(M>1-\delta_i).
\end{align*}
Since 
$$R\stackrel{d}{=}\sum_{j=1}^\infty \prod_{k=1}^{j-1} M_k\geq \sum_{j=1}^{n+1} \prod_{k=1}^{j-1} M_k\stackrel{d}{=}R_n,$$
we get for any $n\geq 1$,
$$\P(R>x_n)\geq \prod_{i=1}^n \P(M>1-\delta_i).$$
In each of the considered cases there exists a strictly convex differentiable function $h$ such that $f(x)\sim h(x)$. 
We take the logarithms and divide both sides by $h(x_{n})$ to obtain
\begin{align*}
\frac{\ln\P(R>x_n)}{h(x_n)}\geq \frac{\sum_{i=1}^n \ln\P(M>1-\delta_i)}{h(x_{n})}.
\end{align*}
Our aim is to choose a sequence $(\delta_n)_n$ in such way that the right-hand side of the above inequality tends to the optimal constant, which was obtained in the previous section. 

If $x_n\to\infty$ and $x_n$ is strictly monotone, we may use Stoltz--Ces\`aro theorem (see \cite[Problem 70]{PS98}) to get
$$I=\lim_{n\to\infty} \frac{\sum_{i=1}^n \ln\P(M>1-\delta_i)}{h(x_n)}=\lim_{n\to\infty} \frac{\ln\P(M>1-\delta_n)}{h(x_n)-h(x_{n-1})}.$$
Recall that $\ln\P(M>1-\delta_n)=-\delta_n f(1/\delta_n)$ and that by convexity of $h$ we have $h(x_{n})-h(x_{n-1})\geq h'(x_{n-1})(x_n-x_{n-1})$. So, 
\begin{align}\label{xn}
I\geq \lim_{n\to\infty}\frac{-\delta_n f(\frac{1}{\delta_n})}{h'(x_{n-1})(x_n-x_{n-1})}.
\end{align}

We distinguish two cases:
\begin{description}
\item[Case I] $f\in R(r^\ast)$, $r^\ast\geq1$,
\item[Case II] $f\in \Gamma$.
\end{description}

\subsection*{Case I}
$f\in R(r^\ast)$, $r^\ast\geq 1$. For $a\in(0,1)$, take $\delta_n=(1-a)/x_{n-1}$. Then $x_n=an+1$.
For such a choice by \eqref{xn}, we obtain
\begin{align*}
I & \geq \lim_{n\to\infty}\frac{-\frac{1-a}{x_{n-1}}f(\frac{x_{n-1}}{1-a})}{h'(x_{n-1})a}=-\frac{(1-a)^{1-r^\ast}}{ar^\ast}=:-i(a).
\end{align*}
The infimum of $i(a)$ is attained at the point $a=(r^{\ast})^{-1}$. Finally,
$$I\geq -r^{r^\ast-1}.$$
Particularly, if $r^\ast=1$, then $I\geq-1$.

\subsection*{Case II}
Since $f(x)\sim h(x)$ and both $f$ and $h$ belong to $\Gamma$, they have common auxiliary function $g=h/h'$. Take $x_1=1+\varepsilon$ and $1/\delta_n=x_{n-1}+g(x_{n-1})$ for $n>1$. 
We have
$$x_n-x_{n-1}=1-\delta_n x_{n-1}=\frac{g(x_{n-1})}{x_{n-1}+g(x_{n-1})}.$$
For such a choice by \eqref{xn}, we obtain
\begin{align*}
I\geq-\lim_{n\to\infty} \frac{f\left(x_{n-1}+g(x_{n-1})\right)}{h(x_{n-1})}.
\end{align*}
If $x_{n}\to\infty$, then by \eqref{Gamma} we get $I\geq -e$. 
Assume that $x_n\not\to \infty$. Thus, $x_n$ being increasing, converges to some $p>x_1=1+\varepsilon$, say. Then we have $0=\frac{g(p)}{p+g(p)}$ - a contradiction, since $g(x)>0$ for any $x>1$.

Let us get back to the general situation. From $\liminf_{n\to\infty} \frac{\ln \P(R>x_n)}{f(x_n)}\geq-B$, we need to conclude that
\begin{align}\label{liminf}
\liminf_{x\to\infty} \frac{\ln \P(R>x)}{f(x)}\geq -B.
\end{align}
Observe first that if $x_{n-1}< x\leq x_{n}$, then $\frac{\ln \P(R>x)}{f(x)}\geq \frac{\ln \P(R>x_n)}{f(x_{n-1})}$. Therefore,
$$\liminf_{x\to\infty} \frac{\ln \P(R>x)}{f(x)}\geq \liminf_{n\to\infty} \frac{f(x_n)}{f(x_{n-1})}\frac{\ln \P(R>x_n)}{f(x_n)}.$$
It is left to show that
$$\frac{f(x_n)}{f(x_{n-1})}\to 1.$$
But $x_n-x_{n-1}\in(0,1)$, so the only case that needs some more attention is when $f\in\Gamma$. In this case, we have
$$\frac{f(x_n)}{f(x_{n-1})}=\frac{f(x_{n-1}+\frac{1}{x_{n-1}+g(x_{n-1})}g(x_{n-1}))}{f(x_{n-1})}\to1,$$
since the convergence in \eqref{Gamma} is uniform (see \citep[Proposition 3.10.2]{BGT89}).
Thus, \eqref{liminf} holds and we finally obtain
$$-B\leq \liminf_{x\to\infty} \frac{\ln \P(R>x)}{f(x)}\leq \limsup_{x\to\infty} \frac{\ln \P(R>x)}{f(x)}\leq -B.$$

\section{Concluding remarks}\label{remarks}
\begin{enumerate}
\item In Theorem \ref{mainr}, the assumption of ultimate strict convexity of $f$ when $f\in R(1)$ implies that $f$ is ultimately continuous and so $M$ may not have atoms in the left neighbourhood of $1$. Lower bound can be obtained without this assumption by the use of Goldie--Gr{\"u}bel inequality (see (5.7) in \citep{GG96} or Proposition 1 in \citep{HitWes09} for general formulation). The problem is with the upper bound as it is not true that $f(x)\sim f^{\ast\ast}(x)$ for $f\in R(1)$ in general. Possibly this case can be proved without the assumption of convexity using the approach of Hitczenko \citep{Hit10}.
\item
Consider Theorem \ref{mainr} and Corollary \ref{ato} with $\P(M\in[0,1])=1$ replaced by $\P(|M|\leq 1)=1$. It turns out that the same conclusion holds if additionally
\begin{align}\label{tr}
\ln \P\left(|M|>1-\frac 1x\right)\sim\ln \P\left(M>1-\frac 1x\right).
\end{align}
Indeed, following the idea of \citep{GG96},
$$\P(R>x)\leq \P(|R|>x)\leq \P(q\sum_{j=1}^\infty\prod_{k=1}^{j-1} |M_k|>x).$$
We therefore obtain that
$$\limsup_{x\to\infty}\frac{\ln\P(R>x)}{-\frac{x}{q}\ln \P(|M|>1-\frac {q}x)}\leq -B,$$
where $B$ is the optimal constant. On the other hand, the approach laid out in Section \ref{lower} for the lower bound holds also when $M$ is allowed to take negative values, thus
$$\liminf_{x\to\infty} \frac{\ln\P(R>x)}{-\frac{x}{q}\ln \P(M>1-\frac {q}x)}\geq -B$$
and if \eqref{tr} holds, then 
$$\lim_{x\to\infty}\frac{\ln\P(R>x)}{-\frac x{q}\ln \P(M>1-\frac {q}x)}= -B.$$

\item It is very interesting that we were able to show the asymptotic of log-tail of $R$ for $f\in\Gamma$, but not for $f\in R(\infty)\setminus\Gamma$.
The latter case is much harder since neither \eqref{Gamma} nor \eqref{gg} hold, which were crucial in the proof of Theorem \ref{mainr} when $f\in\Gamma$. 

If $f\in R(\infty)$ is nondecreasing (this is our case), then by \citep[Proposition 2.4.4]{BGT89} $f\in KR(\infty)$, that is, $f$ has the following representation:
$$f(x)=\exp\left\{\eta(x)+z(x)+\int_0^x\frac{\varepsilon(t)}{t}\,dt\right\},$$
where $\eta(x)\to d$ and $\varepsilon(x)\to\infty$ as $x\to\infty$ and $z(\cdot)$ is nondecreasing, but unfortunately this seems of little help.

One of the examples of $f\in R(\infty)\setminus\Gamma$ is $f(x)=\exp(2x-\cos x)$, which is even strictly convex. In this case, we have
$$\frac{f\left(x+ug(x)\right)}{f(x)}=\exp\left\{\cos x-\cos\left(x+\frac{u}{2+\sin x}\right)+\frac{2 u}{2+\sin(x)}\right\},$$
which is periodic, thus \eqref{Gamma} does not hold.
The best we can get in this situation is 
$$-e^3\leq \liminf_{x\to\infty} \frac{\ln \P(R>x)}{f(x)}\leq \limsup_{x\to\infty} \frac{\ln \P(R>x)}{f(x)}\leq -e^{-1},$$
using $e^{2x-1}\leq f(x)\leq e^{2x+1}$. This is much stronger than \eqref{genhit}, but still not fully satisfactory.
\end{enumerate}

\section*{Acknowledgements}
The author would like to thank Professor Jacek Weso{\l}owski, Kamil Szpojankowski and Marcin {\'S}wieca for valuable discussions. The author would also like to thank Wojciech Matysiak and Kamil Kosi{\'n}ski for their helpful comments. 
The author is grateful to the referees for their careful reading of the manuscript, which helped to remove many typos and inaccuracies.  The proof of Lemma \ref{lem1} was shortened thanks to a referee's remark.

\bibliographystyle{imsart-nameyear}


\def\polhk#1{\setbox0=\hbox{#1}{\ooalign{\hidewidth
  \lower1.5ex\hbox{`}\hidewidth\crcr\unhbox0}}}

\end{document}